\documentclass[a4paper,10pt]{amsart}
\usepackage[utf8]{inputenc}

\usepackage{amsmath, amssymb, amsthm}
\newtheorem{prop}{Proposition}
\newtheorem*{thm}{Theorem}
\newtheorem*{cor}{Corollary}
\theoremstyle{remark}
\newtheorem*{ex}{Example}

\title[An algorithm for the topological Euler characteristic]{An algorithm for computing \\ the topological Euler characteristic \\ of complex projective varieties}
\author{Christine Jost}

    \address{Stockholm University, Department of
             Mathematics, SE-106 91 Stockholm, Sweden}
    \email{jost@math.su.se}
    \urladdr{http://www.math.su.se/$\sim$jost}

\thanks{}
\keywords{Chern-Schwartz-MacPherson classes, topological Euler characteristic, numerical algebraic geometry}
\subjclass[2010]{14Qxx, 14C17, 65H10}

\begin{document}

\begin{abstract}
We present an algorithm for the symbolic and numerical computation of the degrees of the Chern-Schwartz-MacPherson classes of a closed subvariety of projective space $\mathbb{P}^n_\mathbb{C}$. As the degree of the top Chern-Schwartz-MacPherson class is the topological Euler characteristic, this also yields a method to compute the topological Euler characteristic of projective varieties. The method is based on Aluffi's symbolic algorithm to compute degrees of Chern-Schwartz-MacPherson classes, a symbolic method to compute degrees of Segre classes, and the regenerative cascade by Hauenstein, Sommese and Wampler.  The new algorithm complements the existing algorithms. We also give an example for using a theorem by Huh to compute an invariant from algebraic statistics, the maximum likelihood degree of an implicit model.
\end{abstract}

\maketitle

\section{Introduction}

The topological Euler characteristic is a widely studied invariant in many areas of mathematics. In this article, we present an algorithm for the symbolic and numerical computation of the topological Euler characteristic of a possibly singular closed subvariety of $\mathbb{P}^n_\mathbb{C}$. More generally, we present a method to compute the degrees of the Chern-Schwartz-MacPherson classes of such a variety. As the degree of the top Chern-Schwartz-MacPherson class is the topological Euler characteristic, this yields a method to compute the topological Euler characteristic for projective varieties. 

The method we describe is based on several different known methods. According to Aluffi's article \cite{A}, degrees of the Chern-Schwartz-MacPherson classes can be computed from so-called shadows of graphs, also called projective degrees of a map. Using other results from \cite{A}, we prove in this article that the shadows of graphs can be computed from the degrees of Segre classes, which in turn can be computed from so-called residuals, according to \cite{EJP}. The residuals can be computed symbolically, but are also a by-product of the regenerative cascade, a method for computing the numerical irreducible decomposition of a variety developed in \cite{HSW}.

The existing methods to compute the topological Euler characteristic of varieties are, to the author's knowledge, all symbolic. The algorithm developed in \cite{A} was implemented by Aluffi in a Macaulay2 package called CSM. Also, Macaulay2 \cite{M2} provides the command euler which computes the Euler characteristic using the computation of Hodge numbers. However, the latter only works for nonsingular varieties. Moreover, in \cite{MB} an algorithm for the computation of degrees of Chern-Schwartz-MacPherson classes of algebraic subsets of $\mathbb{C}^n$ is presented. 

The algorithm described in this article is implemented in version 0.2 and higher of the Macaulay2 package CharacteristicClasses. More details about the implementation can be found in \cite{J}.

We give a short outline of the content of this article. In Section 2, the definitions of Chern-Schwartz-MacPherson classes, Segre classes and their degrees are recalled. In Section 3, we describe how to compute degrees of Chern-Schwartz-MacPherson classes from the degrees of Segre classes, and in Section 4 how to compute degrees of Segre classes numerically using the regenerative cascade. In Section 5, the running times of our implementation are compared to existing implementations. We conclude with an example from algebraic statistics in Section 6. 


\section{Segre classes, Chern-Schwartz-MacPherson classes and their degrees}

We work over the field of complex numbers. In this and the following section we will use the language of schemes instead of varieties. Let $X$ be a possibly singular closed subscheme of $\mathbb{P}^n$, embedded by $i\colon X \hookrightarrow \mathbb{P}^n$, and let $k$ be the dimension of $X$. We will recall the definitions of the Chern-Schwartz-MacPherson classes and Segre classes of $X$ as well as their degrees. The standard reference for intersection theory, including Chow groups and Segre classes is \cite{F}. Chern-Schwartz-MacPherson classes are described in several articles by Aluffi, e.g.\ \cite{A}.

Characteristic classes, such as Chern-Schwartz-MacPherson and Segre classes, of $X$ are elements of the Chow group $A_*(X)=\bigoplus_{j=0}^kA_j(X)$ of $X$, i.e., they are cycles modulo rational equivalence. However, the Chow group may be hard to compute in general. Hence computational methods for characteristic classes focus on computing the \emph{degrees} of the respective classes rather than the classes themselves. Let $\alpha = \sum_{i=0}^r \alpha_i [V_i]$ be a $d$-dimensional cycle in $A_{d}(X)$, then its degree is defined as $\mathrm{deg}( \alpha ) = \sum_{i=0}^r \alpha_i \, \mathrm{deg}(V_i)$, where $\mathrm{deg}(V_i)$ is the degree of $V_i$ considered as a subvariety of $\mathbb{P}^n$. Equivalently, one can consider the pushforward $i_*(\alpha)$ of $\alpha$ to the Chow group $A_*(\mathbb{P}^n)$ of $\mathbb{P}^n$. Let $H$ be the hyperplane class, then $A_*(\mathbb{P}^n) = \mathbb{Z}[H]/(H^{n+1})$ and $i_*(\alpha) = H^{n-d}\sum_{i=0}^r \alpha_i \mathrm{deg}(V_i)$. Observe that the definition of degree 
does 
not agree with the 
usual definition of the degree of cycle classes denoted by $\int$. 

The total Chern class of a nonsingular variety is defined to be the total Chern class of its tangent bundle. There are several generalizations of this concept to possibly singular schemes, including Chern-Schwartz-MacPherson classes. They were defined independently by MacPherson \cite{M}, proving a conjecture of Grothendieck, and Schwartz \cite{S}, and were shown to agree in \cite{BS}. Chern-Schwartz-MacPherson classes enjoy nice functorial properties, which we recall  by summarizing sections 2.2 and 2.3 of \cite{A}. Let $S$ be a proper scheme. The Chern-Schwartz-MacPherson class of a closed subscheme $X$ of $S$ is an element $c_\mathrm{SM}(X)$ in $A_*(X)$ such that $c_\mathrm{SM}(X) = c(X) \cap [X]$, the total Chern class of $X$, for nonsingular schemes $X$. The $i$-th Chern-Schwartz-MacPherson class $(c_\mathrm{SM})_i(X) \in A_{k-i}(X)$ is then the codimension $i$ part of the total Chern-Schwartz-MacPherson class $c_\mathrm{SM}(X)$. 
Chern-Schwartz-MacPherson classes extend to constructible functions by $c_\mathrm{SM}(\sum_{V \subset S} m_V \mathbf{1}_V) = \sum_{V \subset S} m_V c_\mathrm{SM}(V)$. This actually gives a natural transformation $\mathcal{C} \rightsquigarrow \mathcal{A}$ from the functor of 
constructible functions $\mathcal{C}$ to the Chow group functor $\mathcal{A}$. 
 The functor $\mathcal{C}$ of constructible functions maps a scheme to the abelian group of constructible functions on it. A morphism $f:S \rightarrow T$ of schemes is mapped to the morphism $\mathcal{C}(f)$ of abelian groups by $\mathcal{C}(f)(\mathbf{1}_V)(y) = \chi(f^{-1}(y)\cap V)$, for $V \subseteq S$ a subscheme and $y \in T$ a closed point. 
 Here $\chi$ is the topological Euler characteristic and hence $\chi(f^{-1}(y)\cap V)$ the topological Euler characteristic of the fiber of the point $y$.
As a special case of the functorial properties, we get that Chern-Schwartz-MacPherson classes compute the Euler characteristic of the support of schemes. Let $\kappa\colon X \rightarrow \mathrm{point}$. Then
\begin{equation*}
 \kappa_*c_\mathrm{SM}(X) = c_\mathrm{SM}(\mathcal{C}(\kappa) \mathbf{1}_X) = c_\mathrm{SM}(\chi(X_\mathrm{red})\mathbf{1}_\mathrm{point}) 
= \chi(X_\mathrm{red})[\mathrm{point}],
\end{equation*}
hence $\int c_\mathrm{SM}(X) = \chi(X_\mathrm{red})$, where $\int$ as usual denotes the degree of the top class. Observe that constructible functions follow laws of exclusion-inclusion, e.g.,
\begin{equation*}
 \mathbf{1}_{X_1 \cap X_2} = \mathbf{1}_{X_1} + \mathbf{1}_{X_2}  - \mathbf{1}_{X_1 \cup X_2}.
\end{equation*}
It follows that Chern-Schwartz-MacPherson classes follow similar laws of exclusion-inclusion, e.g.,
\begin{equation*}
 c_\mathrm{SM}(X_1 \cap X_2) = c_\mathrm{SM}(X_1) + c_\mathrm{SM}(X_2) - c_\mathrm{SM}(X_1 \cup X_2).
\end{equation*}

Finally, we recall the definition of Segre classes of closed subschemes of $\mathbb{P}^n$.
The $i$-th Segre class $s_i(X, \mathbb{P}^n)$ of $X$ embedded in $\mathbb{P}^n$ is defined to be the $i$-th Segre class of the normal cone $C_X\mathbb{P}^n$ of $X$ in $\mathbb{P}^n$. Instead of using the definition of Segre classes of cones, we use a shortcut, allowing us to define the Segre classes $s_i(X, \mathbb{P}^n) \in A_{k-i}$ for $0 \le i \le k$ directly. Let $E_X$ be the exceptional divisor of the blow-up $\mathrm{Bl}_X\mathbb{P}^n$ of $\mathbb{P}^n$ along $X$, and let $\eta\colon E_X \rightarrow X$ be the projection. Then $ s_i(X,\mathbb{P}^n) := (-1)^p \eta_*(E_X^p) $ for $p=n+i-k$, where $E_X^p$ denotes the $p$-th self intersection of the exceptional divisor $E_X$. The total Segre class $s(X,\mathbb{P}^n)$ of $X$ in $\mathbb{P}^n$ is then defined as $s(X,\mathbb{P}^n) = 1 +  s_1(X,\mathbb{P}^n) + \ldots +  s_k(X,\mathbb{P}^n)$.

\section{Computing degrees of Chern-Schwartz-MacPherson classes using degrees of Segre classes}

In this section we combine results from \cite{A} with a theorem proved here to give an algorithm for the computation of the degrees of Chern-Schwartz-MacPherson classes, provided an algorithm for the computation of degrees of Segre classes. As in the previous section, we consider a $k$-dimensional closed subscheme $X$ of $\mathbb{P}^n$, embedded by $i\colon X \hookrightarrow \mathbb{P}^n$. Recall that computing the degrees of the Chern-Schwartz-MacPherson classes is equivalent to computing the pushforward $i_*c_\mathrm{SM}(X)$  of the total Chern-Schwartz-MacPherson class to the Chow group $A_*(\mathbb{P}^n)$ of $\mathbb{P}^n$.

Observe first that it suffices to find an algorithm computing the Chern-Schwartz-MacPherson classes of hypersurfaces. The Chern-Schwartz-MacPherson
classes of lower-dimensional schemes can then be computed using the exclusion-inclusion principle described in the previous section:
\begin{equation*}
 i_*c_\mathrm{SM}(X_1 \cap X_2) = i_*c_\mathrm{SM}(X_1) + i_*c_\mathrm{SM}(X_2) - i_*c_\mathrm{SM}(X_1 \cup X_2).
\end{equation*}
 Due to a result from \cite{A}, the Chern-Schwartz-MacPherson classes of a hypersurface can be computed from the so-called shadow of the graph of the singular locus of the hypersurface. Reversing another result from \cite{A}, we prove here that this shadow can be computed from the degrees of the Segre classes of the singular locus.
We shortly review the definitions and results from \cite{A}.

Let $X$ be a hypersurface in $\mathbb{P}^n$, given by a homogeneous polynomial $f$ in the polynomial ring $\mathbb{C}[x_0, \ldots, x_n]$. The singular locus of $X$ is given by the ideal $(\frac{\partial f}{\partial x_0}, \ldots, \frac{\partial f}{\partial x_n})$. The blow-up of $\mathbb{P}^n$ along the singular locus of $X$ is a closed subscheme of $\mathbb{P}^n \times \mathbb{P}^n$ which also can be seen as the closure of the graph of $(\frac{\partial f}{\partial x_0}: \ldots: \frac{\partial f}{\partial x_n})\colon \mathbb{P}^n \rightarrow \mathbb{P}^n$. Denote its class in the Chow ring of $\mathbb{P}^n \times \mathbb{P}^n$ by $\Gamma$. The structure theorem for the Chow group of projective bundles (Theorem 3.3 in \cite{F}) assigns to $\Gamma$ a cycle in the Chow group of the first factor $\mathbb{P}^n$. Aluffi calls this cycle the \emph{shadow} of $\Gamma$, and denotes it by $G = \sum_{i=0}^n g_i H^i$. The numbers $g_0, \ldots, g_n$ are also called the projective degrees of the map 
$(\frac{\partial f}{\partial x_0}: \ldots :  \frac{
\partial f}{\partial x_n})$, see chapter 19 of \cite{Ha}. According to Lemma 4.2 in \cite{A2}, the graph of $\Gamma$ can be computed by intersecting $\Gamma$ with the pullback of the 
hyperplane class from the second factor and projecting to the first factor. The importance of the shadow of the graph of the singular locus is that it can be used to compute the degrees of the Chern-Schwartz-MacPherson and Segre classes of $X$, according to the following propositions, Theorem 2.1  and Proposition 3.1 from \cite{A}.
\begin{prop}[Aluffi] \label{CSMfromG} Using the notation above, the pushforward of the Chern-Schwartz-MacPherson class of $X$ is given by
\begin{equation*}
 i_* c_\mathrm{SM} (X) = (1+H)^{n+1} - \sum_{j=0}^n g_j(-H)^j(1+H)^{n-j}.
\end{equation*}
\end{prop}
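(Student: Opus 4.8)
The plan is to read the identity as a comparison between $i_*c_\mathrm{SM}(X)$ and the Chern class of the ambient space. Since the Euler sequence gives $c(T\mathbb{P}^n)=(1+H)^{n+1}$, the leading term $(1+H)^{n+1}$ is exactly $i_*c_\mathrm{SM}(\mathbb{P}^n)$. Applying the exclusion-inclusion property recalled in Section 2 to $\mathbf 1_{\mathbb{P}^n}-\mathbf 1_X=\mathbf 1_{\mathbb{P}^n\setminus X}$, the proposition becomes equivalent to the statement that the correction term is the Chern-Schwartz-MacPherson class of the open complement. So I would first reduce everything to computing $c_\mathrm{SM}(U)$ for $U=\mathbb{P}^n\setminus X=\{f\neq 0\}$, and prove
\begin{equation*}
 \sum_{j=0}^n g_j(-H)^j(1+H)^{n-j} = i_*c_\mathrm{SM}(\mathbf 1_U).
\end{equation*}

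Next I would bring in the gradient map $(\tfrac{\partial f}{\partial x_0}:\cdots:\tfrac{\partial f}{\partial x_n})\colon\mathbb{P}^n\dashrightarrow\mathbb{P}^n$, whose graph is the class $\Gamma$ and whose base locus is the singular scheme $Y$ of $X$; by Euler's relation $Y\subseteq X$, so the map restricts to a morphism on $U$. Writing $h$ and $\check h$ for the hyperplane pullbacks from the two factors, the projective degrees are the intersection numbers $g_j=\int_\Gamma h^{n-j}\check h^{\,j}$, which count, for a general linear subspace $L$ of dimension $j$ and a general linear subspace $M$ of codimension $j$, the points $p\in L$ whose gradient image lies in $M$. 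I would then convert these incidence counts into the coefficients of $c_\mathrm{SM}(U)$ through the functoriality of MacPherson's transformation $\mathcal C\rightsquigarrow\mathcal A$ recalled in Section 2: pushing $\mathbf 1_U$ forward along the gradient morphism and along generic linear projections expresses the $H$-adic coefficients of $c_\mathrm{SM}(U)$ as Euler characteristics of generic linear sections of $U$, while (in the style of Huh's theorem invoked in Section 6) the same sections compute the $g_j$.

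The remaining step is then purely formal: expand each $(1+H)^{n-j}$ by the binomial theorem, collect the coefficient of every $H^m$, and check that the resulting triangular, hence invertible, linear transformation between the projective degrees $g_j$ and the Euler characteristics of linear sections of $U$ agrees with the one produced geometrically. A useful consistency check comes from the codimension-zero part: the coefficient of $H^0$ on the right-hand side is $1-g_0$, and since $X$ is a hypersurface its pushforward class lives in degrees $\ge 1$, so this coefficient must vanish, forcing $g_0=1$ — precisely the statement that $\Gamma\to\mathbb{P}^n$ is birational onto the first factor.

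The main obstacle is the geometric heart of the second paragraph: establishing that each projective degree $g_j$ contributes exactly the signed binomial term $g_j(-H)^j(1+H)^{n-j}$ to the Euler characteristics of the generic sections of $U$. This demands a careful genericity and transversality analysis of the fibers of the gradient map over general flags, together with the blow-up relation $\check h|_\Gamma=(d-1)h-E_X$ for $d=\deg f$, so that the contribution of the exceptional divisor $E_X$ over $Y$ is correctly absorbed by inclusion-exclusion. I expect the signs $(-1)^j$ and the weights $(1+H)^{n-j}$ to emerge from pushing the ambient total Chern class through $\mathrm{Bl}_Y\mathbb{P}^n=\Gamma$ via $\eta$ and subtracting the exceptional contribution; making \emph{that} identification rigorous, rather than the subsequent binomial bookkeeping, is where the real work lies.
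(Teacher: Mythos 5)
First, a point of reference: the paper does not prove this proposition at all. It is quoted verbatim as Theorem~2.1 of Aluffi's article \cite{A}, so there is no internal proof to compare against; what you have written is a plan for reproving Aluffi's theorem from scratch.

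Your opening reduction is correct and is in fact the standard way to read the formula: since $c_\mathrm{SM}(\mathbb{P}^n)=(1+H)^{n+1}\cap[\mathbb{P}^n]$ and $\mathbf 1_{\mathbb{P}^n}=\mathbf 1_X+\mathbf 1_{\mathbb{P}^n\setminus X}$, the statement is equivalent to $c_\mathrm{SM}(\mathbf 1_U)=\sum_j g_j(-H)^j(1+H)^{n-j}$ for the complement $U$. Your identification $g_j=\int_\Gamma h^{n-j}\check h^{\,j}$ and the consistency check $g_0=1$ are also fine. The problem is that everything after that is a description of what would have to be proved rather than a proof: the identity relating the projective degrees of the gradient map to $c_\mathrm{SM}(\mathbf 1_U)$ \emph{is} the entire content of the theorem, and you explicitly park it as ``where the real work lies.'' The route you gesture at --- expressing the $H$-adic coefficients of $c_\mathrm{SM}(\mathbf 1_U)$ as Euler characteristics of general linear sections and matching these against the incidence counts $g_j$ --- can be made to work, but it requires the full strength of the (nontrivial) correspondence between polar/projective degrees and Euler characteristics of generic hyperplane sections, together with the functoriality of MacPherson's transformation applied to generic linear projections; none of that is carried out, and citing Huh's theorem from Section~6 is a pointer, not an argument. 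Note also that Aluffi's actual proof takes a different, purely intersection-theoretic route: he starts from his formula in \cite{A3} expressing $c_\mathrm{SM}(X)$ for a hypersurface in terms of $c(T\mathbb{P}^n)$, $s(X,\mathbb{P}^n)$ and the Segre class of the singular scheme $Y$ (twisted by $\mathcal{O}(X)$), and then converts $s(Y,\mathbb{P}^n)$ into the shadow $G$ via the dictionary of \cite{A2} --- essentially Proposition~\ref{SfromG} of this paper run in the direction opposite to the Theorem proved here. If you want a self-contained argument, that is the shorter path; as it stands, your proposal has a genuine gap at its geometric core.
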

\begin{prop}[Aluffi] \label{SfromG} Let $Y$ be a subscheme of $\mathbb{P}^n$ given by an ideal with generators all of the same degree $r$, and let $G$ be the shadow of the graph of $Y$. Then the pushforward of the total Segre class of $Y$ is given by
\begin{equation*}
 i_* s(Y,\mathbb{P}^n) = 1 - c(\mathcal{O}(rH))^{-1} \cap (G \otimes \mathcal{O}(rH)).
\end{equation*}
\end{prop}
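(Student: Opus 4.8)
The plan is to realize the total Segre class of $Y$ as a pushforward from the blow-up $\mathrm{Bl}_Y\mathbb{P}^n$ and then to rewrite that pushforward in terms of the shadow. Since the generators of the ideal of $Y$ all have degree $r$, they define a rational map $\mathbb{P}^n \dashrightarrow \mathbb{P}^m$, where $m+1$ is the number of generators, and the closure of its graph is precisely the blow-up $B=\mathrm{Bl}_Y\mathbb{P}^n \subset \mathbb{P}^n\times\mathbb{P}^m$. Let $\pi\colon B\to\mathbb{P}^n$ be the first projection, i.e.\ the blow-up morphism, and $p\colon B\to\mathbb{P}^m$ the second. Writing $H$ and $h$ for the pullbacks to $B$ of the hyperplane classes of $\mathbb{P}^n$ and $\mathbb{P}^m$ respectively, I would first record the key relation between the exceptional divisor $E$ and these two classes: the linear system resolving the map on $B$ is $|rH-E|$, so that $h = rH - E$, equivalently $E = rH - h$. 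This single identity is what links the Segre-class side to the shadow side.

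Next I would set down the two pushforward descriptions to be compared. On one side, the blow-up description of Segre classes (Fulton's formula) gives
\begin{equation*}
 i_* s(Y,\mathbb{P}^n) = \pi_*\!\left(\frac{E}{1+E}\right) = \sum_{j\ge 1}(-1)^{j-1}\pi_*(E^j),
\end{equation*}
where, since the relevant classes are supported on $E$ and $\pi$ restricts to $\eta$ there, the pushforward may be taken all the way to $\mathbb{P}^n$. On the other side, the structure theorem for the Chow groups of projective bundles, applied to the trivial bundle $\mathbb{P}^n\times\mathbb{P}^m\to\mathbb{P}^n$, identifies the \emph{shadow} through the pushforwards of the powers of $h$: concretely $\pi_*(h^i) = g_i H^i$, so that $G = \sum_i g_i H^i = \pi_*\!\left(\tfrac{1}{1-h}\right)$.

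The core of the argument is then purely formal. Substituting $E = rH - h$ and expanding as geometric series in the truncated Chow ring $A_*(B)$ yields
\begin{equation*}
 \frac{E}{1+E} = 1 - \frac{1}{(1+rH)-h} = 1 - \sum_{i\ge 0}\frac{h^i}{(1+rH)^{i+1}}.
\end{equation*}
Because $rH$ is pulled back from $\mathbb{P}^n$, the factors $(1+rH)^{-(i+1)}$ pass through $\pi_*$ by the projection formula, and applying $\pi_*(1)=1$ together with $\pi_*(h^i)=g_iH^i$ term by term gives
\begin{equation*}
 i_* s(Y,\mathbb{P}^n) = 1 - \sum_{i\ge 0}\frac{g_i H^i}{(1+rH)^{i+1}} = 1 - \frac{1}{1+rH}\sum_{i\ge 0}\frac{g_i H^i}{(1+rH)^i}.
\end{equation*}
Recognizing the inner sum as $G\otimes\mathcal{O}(rH)$, namely the class obtained by dividing the codimension-$i$ part $g_iH^i$ of $G$ by $(1+rH)^i$, and the prefactor $\tfrac{1}{1+rH}$ as $c(\mathcal{O}(rH))^{-1}$, produces the asserted formula.

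The step I expect to be the main obstacle is bookkeeping conventions rather than geometry: I must pin down exactly the definition of the shadow and, above all, of Aluffi's twisting operation $\otimes$, so that the last display reads off as $c(\mathcal{O}(rH))^{-1}\cap(G\otimes\mathcal{O}(rH))$. A secondary point needing care is that $Y$ may be singular or non-reduced, so I would use the scheme-theoretic blow-up formula for Segre classes throughout, and I would justify the geometric-series inversions as finite identities in the graded ring $A_*(B)$, where all powers of $H$ and $h$ beyond the dimension bounds vanish.
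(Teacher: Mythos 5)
Your argument is correct, but note that the paper itself offers no proof of this statement: it is quoted verbatim as Proposition~3.1 of Aluffi's \emph{Computing characteristic classes of projective schemes} \cite{A}, so there is no in-paper proof to compare against. What you have written is essentially Aluffi's own derivation --- identify $\mathrm{Bl}_Y\mathbb{P}^n$ with the graph closure in $\mathbb{P}^n\times\mathbb{P}^m$, use $h=rH-E$, expand $E/(1+E)$ as a geometric series in $h/(1+rH)$, and push forward term by term using $\pi_*(h^i)=g_iH^i$ --- and all the steps (including the identification of the shadow via the structure theorem and the reading-off of Aluffi's $\otimes$ notation as defined in \cite{A3}) check out.
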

According to Definition 2 in \cite{A3}, $G \otimes \mathcal{O}(rH) = \sum_{i=0}^n \frac{g_i H^i}{c(\mathcal{O}(rH))^i}$.

We prove a theorem reversing Proposition \ref{SfromG}, i.e., a method to compute the shadow of the graph of the singular locus from the degrees of its Segre classes. Together with Aluffi's results and a method to compute the degrees of the Segre classes of a projective scheme, this yields a method to compute the degrees of the Chern-Schwartz-MacPherson classes of a projective scheme.
\begin{thm}
 Let $X$ be a hypersurface in $\mathbb{P}^n$. Let $\widetilde{s_0}, \ldots, \widetilde{s_n}$ be integers related to the degrees of the Segre classes of the singular locus of $X$ by $\widetilde{s_0} = 1$, $\widetilde{s_1} = \ldots = \widetilde{s_{n-k - 1}} = 0$ and $\widetilde{s_i} = - \mathrm{deg} \ s_{i-(n-k)}(\mathrm{sing}(X),\mathbb{P}^n)$ for $i=n-k, \ldots, n$, where $k=\mathrm{dim}(\mathrm{sing}(X))$. Let the shadow of the graph of $\mathrm{sing}(X)$ be denoted by $G = g_0 + \ldots + g_nH^n$. 
 Then the integers $g_j$ can be computed from the integers $\widetilde{s_i}$ by 
\begin{equation*}
 g_j = \sum_{i=0}^j \binom{j}{i}r^{j-i} \widetilde{s_i},
\end{equation*}
 where $j=0,\ldots,n$ and $r$ is the degree of the generators of the ideal of $\mathrm{sing}(X)$.
\end{thm}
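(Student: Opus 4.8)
The plan is to package the data into generating polynomials in the Chow ring $A_*(\mathbb{P}^n) = \mathbb{Z}[H]/(H^{n+1})$ and then to invert the relation of Proposition \ref{SfromG} by undoing Aluffi's tensor operation. Write $\widetilde{S} = \sum_{i=0}^n \widetilde{s_i}H^i$ and set $Y = \mathrm{sing}(X)$, which is cut out by the $n+1$ partials $\partial f/\partial x_0, \ldots, \partial f/\partial x_n$, all of the same degree $r = \mathrm{deg}(f)-1$, so that Proposition \ref{SfromG} applies. First I would translate the definition of the $\widetilde{s_i}$ into the single identity $\widetilde{S} = 1 - i_*s(Y,\mathbb{P}^n)$. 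Indeed, unwinding the degree convention of Section 2, the class $s_m(Y,\mathbb{P}^n) \in A_{k-m}(Y)$ pushes forward to $\mathrm{deg}\, s_m(Y,\mathbb{P}^n)\,H^{n-k+m}$, so $i_*s(Y,\mathbb{P}^n) = \sum_{m=0}^k \mathrm{deg}\, s_m(Y,\mathbb{P}^n)\,H^{n-k+m}$; comparing with $\widetilde{s_0}=1$, $\widetilde{s_1}=\cdots=\widetilde{s_{n-k-1}}=0$, and $\widetilde{s_i}=-\mathrm{deg}\, s_{i-(n-k)}(Y,\mathbb{P}^n)$ yields exactly $\widetilde{S} = 1 - i_*s(Y,\mathbb{P}^n)$, the index shift by $n-k$ accounting for the passage from dimension to codimension.

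Substituting this into Proposition \ref{SfromG} and using $c(\mathcal{O}(rH)) = 1+rH$ turns the statement into $\widetilde{S} = (1+rH)^{-1}\,(G \otimes \mathcal{O}(rH))$, equivalently $G \otimes \mathcal{O}(rH) = (1+rH)\,\widetilde{S}$; note $1+rH$ is a unit in $A_*(\mathbb{P}^n)$ since $rH$ is nilpotent. The next step is to invert the tensor operation. I would record that $A \mapsto A \otimes \mathcal{O}(rH)$ is the ring homomorphism of $A_*(\mathbb{P}^n)$ sending $H \mapsto H/(1+rH)$ (multiplicativity is immediate from Definition 2 of \cite{A3}, since dividing the codimension-$(a+b)$ part by $(1+rH)^{a+b}$ factors through the two graded pieces), and that its inverse is $A \mapsto A \otimes \mathcal{O}(-rH)$, sending $H \mapsto H/(1-rH)$. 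Applying $\otimes\mathcal{O}(-rH)$ to both sides and using multiplicativity gives
\[
 G = \bigl((1+rH)\otimes\mathcal{O}(-rH)\bigr)\cdot\bigl(\widetilde{S}\otimes\mathcal{O}(-rH)\bigr).
\]
Here $(1+rH)\otimes\mathcal{O}(-rH) = 1 + \frac{rH}{1-rH} = (1-rH)^{-1}$ and $\widetilde{S}\otimes\mathcal{O}(-rH) = \sum_{i=0}^n \widetilde{s_i}\,H^i/(1-rH)^i$, so that
\[
 G = \sum_{i=0}^n \widetilde{s_i}\,\frac{H^i}{(1-rH)^{i+1}}.
\]

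Finally I would read off $g_j$ as the coefficient of $H^j$. Expanding with the negative binomial series $\frac{1}{(1-rH)^{i+1}} = \sum_{m\ge 0}\binom{m+i}{i}r^m H^m$, the coefficient of $H^j$ in the $i$-th summand is $\widetilde{s_i}\binom{j}{i}r^{j-i}$ (take $m=j-i$), and summing over $0 \le i \le j$ produces $g_j = \sum_{i=0}^j \binom{j}{i}r^{j-i}\widetilde{s_i}$, as claimed. The computation is elementary once the setup is correct; the only genuine obstacle is bookkeeping, namely matching the index conventions (especially the dimension-to-codimension shift by $n-k$ hidden in the definition of the $\widetilde{s_i}$) and checking that Aluffi's tensor operation really is an invertible ring homomorphism, so that passing from $G \otimes \mathcal{O}(rH)$ back to $G$ is legitimate. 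Alternatively, one can avoid the tensor formalism entirely: expanding $G\otimes\mathcal{O}(rH) = (1+rH)\widetilde{S}$ directly gives $\widetilde{s_j} = \sum_{i=0}^j\binom{j}{i}(-r)^{j-i}g_i$, a lower-triangular system with unit diagonal whose inverse is precisely the stated binomial-inversion formula.
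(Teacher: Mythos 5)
Your proof is correct, and although it starts from the same place as the paper---substituting the definition of the $\widetilde{s_i}$ into Proposition~\ref{SfromG} to obtain $G\otimes\mathcal{O}(rH) = (1+rH)\,\widetilde{S}$---it handles the inversion step in a genuinely different way. The paper expands $\sum_i g_iH^i/(1+rH)^{i+1}$ in powers of $H$, obtains the lower-triangular system $\widetilde{s_i} = \sum_t\binom{i}{t}(-r)^{i-t}g_t$ (exactly your closing remark), and then inverts that system explicitly by proving the combinatorial identity \eqref{comb}. You instead observe that $\otimes\,\mathcal{O}(rH)$ is an invertible ring endomorphism of $\mathbb{Z}[H]/(H^{n+1})$, namely $H\mapsto H/(1+rH)$ with inverse $\otimes\,\mathcal{O}(-rH)$, apply the inverse to both sides to reach the closed form $G = \sum_i\widetilde{s_i}\,H^i/(1-rH)^{i+1}$, and read off the coefficients with a single negative binomial expansion. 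This is slicker: the binomial-inversion identity \eqref{comb} is absorbed into the statement that $\otimes\,\mathcal{O}(rH)$ and $\otimes\,\mathcal{O}(-rH)$ are mutually inverse, which is Aluffi's $(A\otimes\mathcal{L})\otimes\mathcal{M} = A\otimes(\mathcal{L}\otimes\mathcal{M})$ from \cite{A3}, and which you also verify directly in $\mathbb{Z}[H]/(H^{n+1})$. The paper's route is more pedestrian but self-contained, needing only the triangularity of the system to conclude solvability and then a short combinatorial computation to make the inverse explicit. Your intermediate claims all check out: the translation $\widetilde{S}=1-i_*s(Y,\mathbb{P}^n)$ including the codimension shift by $n-k$, the multiplicativity of the tensor operation on $A_*(\mathbb{P}^n)$, the evaluation $(1+rH)\otimes\mathcal{O}(-rH)=(1-rH)^{-1}$, and the coefficient extraction $\binom{(j-i)+i}{i}r^{j-i}=\binom{j}{i}r^{j-i}$.
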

\begin{proof}
 Denote $Y=\mathrm{sing}(X)$. According to Proposition \ref{SfromG}, it holds that
\begin{equation*}
 i_*s(Y, \mathbb{P}) = 1 - c(\mathcal{O}(rH))^{-1} \cap (G\otimes \mathcal{O}(rH)).
\end{equation*}
Recall that $G\otimes \mathcal{O}(rH)$ is defined to be $\sum_{i=0}^n \frac{g_i H^i}{c(\mathcal{O}(rH))^i}$. The identity can be rewritten as
\begin{equation*}
 1 - \sum_{v=0}^k \mathrm{deg}\left(s_v(Y,\mathbb{P}^n)\right) H^{n-k+v} = \frac{1}{1+rH} \sum_{i=0}^n \frac{g_i H^i}{(1+rH)^i} = \sum_{i=0}^n \frac{g_i H^i}{(1+rH)^{i+1}}.
\end{equation*}
We develop the right-hand-side of this identity:
\begin{equation*}
  \sum_{i=0}^n \frac{g_i H^i}{(1+rH)^{i+1}} = \sum_{i=0}^n g_iH^i \left( \sum_{j=0}^n (-rH)^j \right)^{i+1} = \sum_{i=0}^n g_iH^i \sum_{l=0}^n \binom{i+l}{i} (-rH)^l,
\end{equation*}
where the last step uses the following identity for formal sums in a variable $x$:
\begin{equation*}
 \left( \sum_{j=0}^\infty x^j \right)^{i+1} = \sum_{l=0}^\infty \binom{i + l}{i}x^l,
\end{equation*}
for any positive integer $i$.
We continue rewriting the right-hand-side of the equation:
\begin{align*}
 \sum_{i=0}^n g_iH^i \sum_{l=0}^n \binom{i+l}{i} (-rH)^l &=  \sum_{i=0}^n  \sum_{l=0}^n g_i\binom{i+l}{i} (-r)^l H^{i+l}  \\
 &=\sum_{u=0}^n \left( \sum_{t=0}^u \binom{u}{t} g_t (-r)^{u-t} \right) H^u.
\end{align*}
Summarizing, we have proved that
\begin{equation*}
 1 - \sum_{v=0}^k \mathrm{deg}\left(s_v(X,\mathbb{P}^n)\right) H^{n-k+v} = \sum_{u=0}^n \left( \sum_{t=0}^u \binom{u}{t} g_t (-r)^{u-t} \right) H^u.
\end{equation*}
Comparing the coefficients of the powers of $H$ on both sides and changing variables to $i=u=n-k+v$ yields 
\begin{equation*}
 \widetilde{s_i} = \sum_{t=0}^i \binom{i}{t}(-r)^{i-t}g_t.
\end{equation*}
This linear equation system is triangular with ones on the diagonal, hence it is clear that the $g_t$ can be computed from the $\widetilde{s_i}$. Moreover, the matrix of the linear equation system can also be inverted explicitely. It suffices to prove that
\begin{equation}\label{comb}
 \sum_{i=t}^j \binom{j}{i} \binom{i}{t}(-1)^{i-t} = \delta_{jt},
\end{equation}
where $\delta_{jt}$ is the Kronecker delta.
Then it follows that
\begin{align*}
 &\sum_{i=0}^j \binom{j}{i}r^{j-i} \widetilde{s_i} =  \sum_{i=0}^j \binom{j}{i}r^{j-i} \sum_{t=0}^i \binom{i}{t}(-r)^{i-t}g_t \\
&= \sum_{t=0}^j \sum_{i=t}^j \binom{j}{i}  \binom{i}{t}(-1)^{i-t}r^{j-t}g_t
= \sum_{t=0}^j \delta_{jt}r^{j-t}g_t = g_j
\end{align*}
and we are done.

The identity \eqref{comb} is probably well-known. As the author has not been able to find a reference, we prove it for the convenience of the reader. By using the factorial formula for the binomial coefficients, reducing and removing terms not containing $i$ from the summation, one gets
\begin{equation*}
 \sum_{i=t}^j \binom{j}{i} \binom{i}{t}(-1)^{i-t} = \frac{j!}{t!} \sum_{i=t}^j \frac{1}{(j-i)!(i-t)!}(-1)^{i-t}.
\end{equation*}
We expand by $(j-t)!$ and change the summation variable so that $l=i-t$. This yields
\begin{equation*}
 \frac{j!}{t!} \sum_{i=t}^j \frac{1}{(j-i)!(i-t)!}(-1)^{i-t} = \binom{j}{t}\sum_{l=0}^{j-t} \binom{j-t}{l}(-1)^l = \binom{j}{t}\delta_{jt}= \delta_{jt}
\end{equation*}
and we are done.

\end{proof}

\begin{cor}
Let $X$ be a hypersurface in $\mathbb{P}^n$, and let the integers $\widetilde{s_0}, \ldots, \widetilde{s_n}$ be related to the Segre classes of the singular locus of $X$ as in the theorem before. Then the degrees of the Chern-Schwartz-MacPherson classes of $X$ can be computed from the $\widetilde{s_i}$ by
\begin{equation*}
 \mathrm{deg}\ (c_\mathrm{SM})_{p}(X) = \binom{n+1}{q} - \sum_{i=0}^q \widetilde{s_i} \sum_{j=i}^q (-1)^j \binom{j}{i}   \binom{n-j}{q-j}r^{j-i}
\end{equation*}
where $q=n-\mathrm{dim}(X)+p$, $p=0, \ldots, \mathrm{dim}(X)$ and $r$ is the degree of the generators of the ideal of $\mathrm{sing}(X)$.

\end{cor}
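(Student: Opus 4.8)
The plan is to combine the two results already in hand---Aluffi's formula for the pushforward Chern-Schwartz-MacPherson class in Proposition \ref{CSMfromG} and the inversion formula for the $g_j$ established in the Theorem---and then read off the coefficient of the appropriate power of $H$. First I recall that, by the degree conventions of Section 2, the class $(c_\mathrm{SM})_p(X)$ lives in $A_{\dim(X)-p}(X)$, so its pushforward to $A_*(\mathbb{P}^n)$ equals $H^{n-\dim(X)+p}\,\mathrm{deg}\,(c_\mathrm{SM})_p(X) = H^q\,\mathrm{deg}\,(c_\mathrm{SM})_p(X)$. Hence $\mathrm{deg}\,(c_\mathrm{SM})_p(X)$ is exactly the coefficient of $H^q$ in $i_* c_\mathrm{SM}(X)$, and the whole statement reduces to computing that coefficient.

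Next I would extract the coefficient of $H^q$ from Aluffi's expression $i_* c_\mathrm{SM}(X) = (1+H)^{n+1} - \sum_{j=0}^n g_j(-H)^j(1+H)^{n-j}$. The term $(1+H)^{n+1}$ contributes $\binom{n+1}{q}$, which accounts for the first summand in the corollary. For the remaining sum, expanding $(-H)^j(1+H)^{n-j} = (-1)^j \sum_{m} \binom{n-j}{m} H^{j+m}$ shows that the coefficient of $H^q$ arises from $m=q-j$, so the contribution of the $j$-th term is $(-1)^j \binom{n-j}{q-j} g_j$, nonzero only for $j \le q$. This gives the coefficient of $H^q$ in the sum as $\sum_{j=0}^q (-1)^j \binom{n-j}{q-j} g_j$.

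Finally I would substitute $g_j = \sum_{i=0}^j \binom{j}{i} r^{j-i} \widetilde{s_i}$ from the Theorem and interchange the order of summation: for fixed $i$ the index $j$ ranges over $i \le j \le q$, yielding $\sum_{i=0}^q \widetilde{s_i} \sum_{j=i}^q (-1)^j \binom{j}{i} \binom{n-j}{q-j} r^{j-i}$. Subtracting this from $\binom{n+1}{q}$ produces precisely the claimed formula.

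The computation is routine once the bookkeeping is fixed, and there is no genuinely hard step. The two points requiring care are the degree convention pinning down $q = n-\dim(X)+p$, so that the correct power of $H$ is selected, and the interchange of the double summation, where one must track the range $i \le j \le q$ after the substitution. Notably, no separate combinatorial identity such as \eqref{comb} is needed here, since we are composing the two formulas in the forward direction rather than inverting a triangular system.
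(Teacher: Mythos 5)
Your proposal is correct and follows essentially the same route as the paper's proof: both extract the coefficient of $H^q$ from Aluffi's formula in Proposition \ref{CSMfromG} (the paper records this intermediate step as equation \eqref{csmeq}), then substitute the Theorem's expression for $g_j$ and interchange the double summation. Your observations about the degree convention fixing $q=n-\dim(X)+p$ and about identity \eqref{comb} not being needed here are both accurate.
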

\begin{proof}
We use Proposition \ref{CSMfromG} to prove that the Chern-Schwartz-MacPherson classes can be computed from the shadow of the graph $g_i$ by
\begin{equation}\label{csmeq}
 \mathrm{deg}\ (c_\mathrm{SM})_{p}(X) = \binom{n+1}{q} - \sum_{j=0}^q g_j(-1)^j \binom{n-j}{q-j}.
\end{equation} 
Then it follows by the preceeding theorem that
\begin{equation*}
 \mathrm{deg}\ (c_\mathrm{SM})_{p}(X) = \binom{n+1}{q} - \sum_{j=0}^q \sum_{i=0}^j \binom{j}{i}r^{j-i} \widetilde{s_i} (-1)^j \binom{n-j}{q-j}.
\end{equation*}
The claim then follows by changing order of the two summations and rearranging terms.

Proposition \ref{CSMfromG} states that
\begin{equation*}
 i_* c_\mathrm{SM} (X) = (1+H)^{n+1} - \sum_{l=0}^n g_l(-H)^l(1+H)^{n-l}.
\end{equation*}
We rewrite the second term in the right-hand-side of the identity:
\begin{align*}
 &\sum_{l=0}^n g_l(-H)^l(1+H)^{n-l} = \sum_{l=0}^n g_l(-H)^l \sum_{k=0}^{n-l} \binom{n-l}{k} H^k  \\
&= \sum_{l=0}^n \sum_{k=0}^{n-l} g_l(-1)^l \binom{n-l}{k}    H^{k+l} = \sum_{q=0}^n H^q \sum_{j=0}^q g_j(-1)^j \binom{n-j}{q-j}.
\end{align*}
Hence
\begin{equation*}
  i_* c_\mathrm{SM} (X) = \sum_{q=0}^n H^q \left( \binom{n+1}{q} - \sum_{j=0}^q g_j(-1)^j \binom{n-j}{q-j} \right).
\end{equation*}
Comparing the coefficients of the powers of $H$ on both sides yields identity \eqref{csmeq}.
\end{proof}
Recall that it suffices to compute the degrees of the Chern-Schwartz-MacPherson classes for hypersurfaces due to the inclusion-exclusion principle. We hence have found an algorithm to compute the degrees of the Chern-Schwartz-MacPherson classes and hence the topological Euler characteristic of any closed subscheme of $\mathbb{P}^n$, provided an algorithm for the computation of Segre classes.

Observe that the computation of the topological Euler characteristic of projective schemes also allows to compute the Euler characteristic of affine schemes. One just computes the difference between the Euler characteristic of the homogenization and the subscheme at infinity.

\begin{ex}
 We use the preceeding corollary to compute the degrees of the Chern-Schwartz-MacPherson classes of the nodal plane cubic given by the ideal $(x^3 + x^2z - y^2z) \subset \mathbb{C}[x,y,z]$. The singular locus of the curve is the point $P=[0:0:1]$ and the degree of its Segre class $s_0(P,\mathbb{P}^2)$ is the degree of the point, $\mathrm{deg}(s_0(P,\mathbb{P}^2))=1$. Furthermore, the dimension of the ambient space $\mathbb{P}^2$ is $n=2$ and the dimension of the singular locus is $k=0$. Hence $\widetilde{s_0}=1$, $\widetilde{s_1}=0$ and $\widetilde{s_2} = -1$. The generators of the ideal of the singular locus of $(x^3 + x^2z - y^2z)$ have degree 2, so $r=2$.

We start by computing $\mathrm{deg}(c_\mathrm{SM})_0(C)$. Because $p=0$ we have that $q=1$. By the formula in the corollary it then follows that
\begin{align*}
 \mathrm{deg}(c_\mathrm{SM})_0(C) &= \binom{2+1}{1} - \sum_{i=0}^1 \widetilde{s_i} \sum_{j=i}^1 (-1)^j \binom{j}{i} \binom{2-j}{1-j}2^{j-i} \\
 &= 3 - (0 \cdot \widetilde{s_0} - \widetilde{s_1}) = 3.
\end{align*}
In the same way, we compute $\mathrm{deg}(c_\mathrm{SM})_1(C)$, here $p=1$ and $q=2$:
\begin{align*}
 \mathrm{deg}(c_\mathrm{SM})_1(C) &= \binom{2+1}{0} - \sum_{i=0}^2 \widetilde{s_i} \sum_{j=i}^2 (-1)^j \binom{j}{i}   \binom{2-j}{2-j}2^{j-i} \\
 &= 3 - (3 \cdot \widetilde{s_0} +3\cdot \widetilde{s_1} + \widetilde{s_2}) = 1.
\end{align*}
So the degree of the top Chern-Schwartz-MacPherson class of the nodal plane cubic is 1. As the degree of the top Chern-Schwartz-MacPherson class equals the topological Euler characteristic, this confirms that the nodal plane cubic has topological Euler characteristic 1.
The pushforward to the Chow ring of $\mathbb{P}^2$ of the Chern-Schwartz-MacPherson class is $i_*c_\mathrm{SM}(C) = 3H + H^2$, where $H$ is the hyperplane class. 
\end{ex}

\section{Computing degrees of Segre classes using the regenerative cascade}

In this section we describe how to compute degrees of Segre classes numerically. 
The main ingredience of the method is the algorithm in \cite{EJP}, a symbolic algorithm for the computation of the degrees of Segre classes. In more detail, the method in \cite{EJP} reduces computation of degrees of Segre classes to the computation of the degrees of so-called residuals, which can be computed symbolically. In the following, we describe how the regenerative cascade from \cite{HSW} can be used to compute the residuals numerically, which was already claimed in \cite{EJP}.
The regenerative cascade is implemented in Bertini \cite{B}, a software for the numerical solution of polynomial equation systems via homotopy continuation. One of the advantages of homotopy continuation is that the algorithms are parallelizable.

We start by recalling the Segre class algorithm from \cite{EJP}. Let $X$ be a closed $k$-dimensional subvariety of $\mathbb{P}^n$ given by an ideal $I$. Let $m$ be the maximal degree of the generators. Pick $n$ random degree $m$ elements $f_1, \ldots, f_n$ of the ideal $I$, corresponding to $n$ hypersurfaces containing $X$. According to a Bertini type theorem, $d$ of these hypersurfaces  intersect in the scheme $X$ and a residual scheme $R_d$ of codimension $d$, where $d=n-k, \ldots, n$. Furthermore, one obtains the following relations between the degrees of the Segre classes of $X$ and the degrees of the residuals
\begin{equation*}
 \mathrm{deg}\left(s_p(X, \mathbb{P}^n)\right) = m^d - \mathrm{deg}(R_d) - \sum_{i=0}^{p-1} \binom{d}{p-i} m^{p-i} \mathrm{deg}\left(s_i(X, \mathbb{P}^n)\right),
\end{equation*}
for $p = d-(n-k)$. One observes that these relations form an upper-triangular linear equation system with ones on the diagonal. Hence the degrees of the Segre classes can be computed easily once the degrees of the residuals are known.

We continue with describing the regenerative cascade developed by Hauenstein, Sommese and Wampler in \cite{HSW}. It is a method to compute the so-called numerical irreducible decomposition of a variety, i.e., to compute (and represent in a speci\-fied way) the components of the solution set of a polynomial equation system. As we will see, the degrees of the residuals are a by-product of these computations. Let a polynomial equation system be given by homogeneous polynomials $f_1, \ldots, f_n \in \mathbb{C}[x_0, \ldots, x_n]$ of the same degree $m$. The aim of the computations is a set of points containing so-called witness sets for each component of the numerical irreducible decomposition of $V(f_1, \ldots, f_n) \subset \mathbb{P}^n$. A witness set for a component is the intersection of the component with a general linear space of complementary dimension, hence a number of generic points on the component.  In general, the polynomial equation system need not have as many equations as the dimension of the 
ambient space and the equations can have different degrees. However, this special case 
is sufficient for our purposes. Let $L_1, \ldots, L_n$ be random linear functions in $\mathbb{C}[x_0, \ldots, x_n]$ and let $\mathcal{F}_k = [f_1, \ldots, f_k, L_{k+1}, \ldots, L_{n}]^T$ for $k=1, \ldots, n$. 
A point in $V(\mathcal{F}_k)$ is called a non-solution if it does not lie in $V(f_1, \ldots, f_n)$. The non-solutions are nonsingular and isolated, and their number is a by-product of the regenerative cascade computations. Geometrically, all solutions of $\mathcal{F}_k$ are obtained by intersecting the variety $V(f_1, \ldots, f_k)$ with a general linear space of codimension $k$. If the polynomials $f_1, \ldots, f_n$ are chosen in the same way as for the Segre class algorithm above, the non-solutions are the intersection of the residual with the linear space. As the residual is pure-dimensional, the number of non-solutions is the degree of the residual. The by-product of the regenerative cascade, the number of non-solutions of $\mathcal{F}_k$, is hence exactly the information needed to compute the degrees of the Segre classes of $X$.

\begin{ex}
The twisted cubic $C_\mathrm{tw}$ is given by the ideal $(xz-y^2,yw-z^2, xw-yz) \subset \mathbb{C}[x,y,z,w]$. We randomly choose three polynomials $ g_1, g_2, g_3$ of degree $r=2$ in the ideal. By using the implementation of the regenerative cascade in Bertini \cite{B}, we compute that the number of non-solutions of $[g_1, g_2, L_3]^T$ is $\mathrm{deg}(R_2) = 1$, and the number of non-solutions of $[g_1, g_2,g_3]^T$ is $\mathrm{deg}(R_3) = 0$. This is consistent with the fact that two general degree 2 elements of the ideal of the twisted cubic cut out the twisted cubic and a line, and that three general degree 2 elements cut out only the twisted cubic. Hence the degrees of the Segre classes of the twisted cubic can be computed in the following way:
\begin{align*}
 \mathrm{deg}(s_0(C_\mathrm{tw}, \mathbb{P}^3)) &= m^d - \mathrm{deg}(R_2) = 2^2 - 1 = 3, \\
 \mathrm{deg}(s_1(C_\mathrm{tw}, \mathbb{P}^3)) &= m^d - \mathrm{deg}(R_3) - \binom{d}{1} m \cdot \mathrm{deg}(s_0) = 2^3 - 0 - 3\cdot 2 \cdot 3 = -10.
\end{align*}
\end{ex}

\section{Implementation and comparison to other implementations}

As said above, the algorithms described in this article are implemented in the Macaulay2 \cite{M2} package CharacteristicClasses. The implementation is described in a more detailed way in \cite{J}.

We compare the running times of our implementation to Aluffi's implementation CSM, December 2011 version, and the routine euler from Macaulay2. Observe that the latter only works for nonsingular varieties and takes a projective variety as input. The command CSMclass from our implementation uses the algorithm described here. The command CSM from Aluffi's implementation with the same name uses the algorithm described in \cite{A}, and the command euler computes the topological Euler characteristic from the Hodge numbers. We use examples similar to the ones in \cite{EJP}, and a two core processor with 1.40GHz and 4MB RAM. The results are summarized in Table \ref{tab:benchmarks}.

\begin{table}[ht] 
\caption{Comparison of run times. }
\label{tab:benchmarks} 
\begin{tabular}{p{5cm}p{2cm}p{2cm}llllllll}
\hline 
Input & CSMclass symbolic & CSMclass numeric & CSM & euler \\
\hline
twisted cubic & $<1$s & 47s & 2s & $<1$s\\
smooth surface in $\mathbb{P}^4$ defined by minors & 34s & 2131s & 14777s & 88s\\
Segre embedding of $\mathbb{P}^1 \times \mathbb{P}^2$ in $\mathbb{P}^{5}$ & 10s & 285s  & 1s & $<1$s\\
\hline
\end{tabular}
\end{table}

The equations defining the different examples are found on the author's webpage, \texttt{www.math.su.se/$\sim$jost/examplesCSM.m2}. All computations were done over the rational numbers. The ideal of the smooth surface is generated by the 2-by-2 minors of a 2-by-3 matrix of random linear forms.

As Table \ref{tab:benchmarks} shows, the different symbolic implementations complement each other. As the run time of both CSMclass and CSM is exponential in the number of generators, the command euler may be the best choice for examples with many generators. On the other hand, euler only works for smooth varieties. The numerical implementation is actually slower than the symbolic implementation for small examples such as shown in Table \ref{tab:benchmarks}. However, for very large examples the symbolic methods may not terminate due to insufficient memory. The numerical implementation does not have this problem. Furthermore, the numerical algorithm is parallelizable as the main step, the regenerative cascade, is parallelizable.

The complexity of computing the topological Euler characteristic has been studied in \cite{BCL} and been found to be complete in the class $\mathrm{FP}_\mathbb{C}^{\#\mathrm{P}_\mathbb{C}}$. Roughly speaking, functions in $\#\mathrm{P}_\mathbb{C}$ are polynomial time counting functions and functions in $\mathrm{FP}_\mathbb{C}^{\#\mathrm{P}_\mathbb{C}}$ are functions computable in polynomial time using oracle calls to functions in $\#\mathrm{P}_\mathbb{C}$. For the precise definition of the class, we refer to, e.g.,\ \cite{BCL}.

\section{Example from algebraic statistics}

We illustrate the algorithm for the numerical computation of the Euler characteristic by an example from algebraic statistics which uses the topological Euler characteristic: the maximum likelihood degree, defined in \cite{CHKS}. We compute the maximum likelihood degree of the statistical model described in Example 2.2.2 in \cite{DSS} using a theorem from \cite{H} together with our implementation of the algorithm presented here. 


To introduce the maximum likelihood degree properly is beyond the scope of this article. The reader interested in details may consult \cite{DSS} or \cite{PS}. 

In algebraic statistics, a statistical model for discrete data is given by a subvariety $X$ of $\mathbb{P}^n=\mathbb{P}^n_\mathbb{C}$. In this context, we focus on implicit models, i.e., subvarieties given by equations. Denote by $p_0, \ldots, p_n$ the homogeneous coordinates of $\mathbb{P}^n$, where $p_i$ represents the probability of the $i$-th event. Of course, the statistically interesting case is that all coordinates are real and positive. 
A data vector is given by natural numbers $u_0, \ldots, u_n$, where $u_i$ is the number of times the $i$-th event was observed. The problem of maximum likelihood estimation is to find the point in the model which best explains the data vector $u$. This is done by maximizing the likelihood function 
\begin{equation*}
 L(p_0, \ldots, p_n) = p_0^{u_0} \cdot \ldots \cdot p_n^{u_n}.
\end{equation*}
One usually assumes that none of the probabilities vanishes, and that they sum up to 1. Hence one usually maximizes the likelihood function on the open subvariety $U$ given by
\begin{equation*}
 U = \{x \in X \vert p_0 \cdot  \ldots \cdot p_n (p_0+ \ldots + p_n) \neq 0 \}.
\end{equation*}
The likelihood function can in general have several critical points. Hence methods finding only local maxima such as the Newton method and its derivates may run into problems. It thus makes sense to define the \emph{maximum likelihood degree} of a model to be the number of critical points for a general choice of the data vector $u$. The maximum likelihood degree was introduced in \cite{CHKS}. Since then considerable amount of research has been done computing the maximum likelihood degree of certain classes of models.

In \cite{H}, Huh generalizes a conjecture of Varchenko on the maximum likelihood degree of hyperplane arrangements to smooth very affine varieties. A very affine variety is one that can be embedded into an algebraic torus. The following theorem is a special case of Theorem 1 in \cite{H}.
\begin{thm}[Huh]
 If the exponents $u_i$ are sufficiently general, then the number of critical points of the likelihood function is equal to the signed Euler characteristic $(-1)^{\mathrm{dim}(U)} \chi(U)$.
\end{thm}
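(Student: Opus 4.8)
The plan is to recognize this as a special case of Huh's theorem and to reprove it by the logarithmic Poincaré--Hopf / Gauss--Bonnet method, converting the count of critical points into the integral of a top Chern class. First I would pass from the likelihood function to its logarithmic derivative. On the open set $U$, where every $p_i$ and the sum $p_0 + \cdots + p_n$ are invertible, the (a priori multivalued) $\log L$ has a single-valued closed logarithmic $1$-form
\begin{equation*}
 \omega_u = d\log L = \sum_{i=0}^n u_i \, \frac{dp_i}{p_i},
\end{equation*}
well-defined on the very affine $U$, and a point of $U$ is a critical point of $L$ exactly when $\omega_u$ vanishes there. Counting critical points thus becomes counting the zeros of the section $\omega_u$ of the cotangent bundle $\Omega^1_U$.

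Next I would exploit the very affine hypothesis. Since $U$ embeds in an algebraic torus, its cotangent sheaf is generated by the forms $d\log y$ of the torus coordinates $y$; in particular the $dp_i/p_i$ globally span $\Omega^1_U$, so $\omega_u$ ranges over a base-point-free linear system as $u$ varies. By resolution of singularities I would then choose a smooth projective compactification $\overline{U}$ whose boundary $D = \overline{U} \setminus U$ is a simple normal crossings divisor. Because $\omega_u$ is assembled from $d\log$ of functions that are units on $U$, it has at worst logarithmic poles along $D$ and therefore extends to a global section of the logarithmic cotangent bundle $\Omega^1_{\overline{U}}(\log D)$.

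With this in place the computation is formal. For generic $u$ all zeros of $\omega_u$ are simple, so their number equals $\int_{\overline{U}} c_d(\Omega^1_{\overline{U}}(\log D))$ with $d = \dim U$; by the standard Chern-class formula for the Euler characteristic of an open variety (the top Chern class of the logarithmic tangent bundle integrates to $\chi(U)$, i.e.\ $\chi(U) = \int_{\overline{U}} c_d(T_{\overline{U}}(-\log D))$), and dualizing, this integral equals $(-1)^d \chi(U)$, which is the assertion.

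The hard part will be the genericity at the boundary: I must show that for sufficiently general $u$ the extended section $\omega_u$ has no zeros on $D$, so that every contribution to the Chern-class count is an honest critical point in the interior and none has escaped to infinity. This is precisely where being very affine is indispensable: global generation of $\Omega^1_{\overline{U}}(\log D)$ by the $d\log$ forms lets a Kleiman-type transversality argument, applied over the compact boundary, rule out boundary zeros for generic $u$, which is equivalent to the finiteness and properness of the likelihood correspondence. Establishing this nondegeneracy at each stratum of the normal crossings divisor is the crux; once it is in hand, the remainder reduces to routine intersection theory on $\overline{U}$.
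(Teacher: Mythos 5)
This statement is quoted verbatim as a special case of Theorem~1 of Huh's paper \cite{H}; the present paper offers no proof of it, so there is nothing internal to compare your argument against. Judged on its own terms, your outline does follow the strategy that actually underlies Huh's theorem (and, before it, the Catanese--Ho\c{s}ten--Khetan--Sturmfels and Orlik--Terao results): interpret critical points of $L$ as zeros of the logarithmic form $\omega_u$, extend $\omega_u$ to a section of $\Omega^1_{\overline{U}}(\log D)$ on an SNC compactification, and invoke the logarithmic Gauss--Bonnet formula $\chi(U)=\int_{\overline{U}} c_d\bigl(T_{\overline{U}}(-\log D)\bigr)$ together with $c_d(\Omega^1(\log D))=(-1)^d c_d(T(-\log D))$.

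The gap is the one you yourself flag and then do not close, and it is not a routine verification --- it is the entire content of the theorem. For an arbitrary SNC compactification $\overline{U}$ of a very affine $U$, the forms $d\log f_i$ (for $f_i$ the units giving the torus embedding) need \emph{not} globally generate $\Omega^1_{\overline{U}}(\log D)$, and a generic $\omega_u$ can perfectly well vanish on boundary strata; when that happens the count $\int c_d(\Omega^1(\log D))$ overcounts the interior critical points and the formula fails as stated for that compactification. What Huh actually proves is that one can choose the compactification carefully (a resolution of a tropical/sch\"on-type compactification) so that global generation of the log cotangent bundle by the $d\log f_i$ holds everywhere, which then forces the zeros of a generic section off the boundary and makes them reduced; alternatively one can route the boundary control through the Gabber--Loeser/Franecki--Kapranov theorem on Euler characteristics of perverse sheaves on tori. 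Your appeal to ``a Kleiman-type transversality argument'' presupposes exactly the global generation you have not established, so as written the argument only recovers the weaker CHKS statement under an explicit nondegeneracy hypothesis on the boundary divisor. A smaller point: on $\mathbb{P}^n$ the form $\sum_i u_i\, dp_i/p_i$ is not well defined unless $\sum_i u_i=0$; you should work with $d\log\bigl(\prod_i p_i^{u_i}/(\sum_i p_i)^{\sum_i u_i}\bigr)$, which is also why the hyperplane $\{p_0+\cdots+p_n=0\}$ is removed from $U$ in the paper's setup.
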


\begin{ex}
We use this theorem on the random censoring model with two events described in Example 2.2.2 in \cite{DSS}. It is given by the ideal $I=(2p_0p_1p_2 + p_1^2p_2 + p_1p_2^2 - p_0^2p_{12} + p_1p_2p_{12})$ in the polynomial ring $\mathbb{C}[p_0,p_1,p_2,p_{12}]$. With our implementation of the algorithm described here, we compute the Euler characteristic of $V(I)$ to be 5. As we are interested in the Euler characteristic of $U= V(I) \setminus V(p_0p_1p_2p_{12}(p_0+p_1+p_2+p_{12}))$, we compute the Euler characteristic of $V(p_0p_1p_2p_{12}(p_0+p_1+p_2+p_{12})) \cap V(I)$, which is 2. So the Euler characteristic of $U$ is $5-2=3$, and we get that the maximum likelihood degree of the model is 3. This agrees with the result in \cite{DSS}. The computations for this example are presented in a more concrete way in Example 3 in \cite{J}. 
\end{ex}

\section*{Acknowledgements}
I would like to thank my advisor Sandra Di Rocco for many helpful discussions. Also thanks to David Eklund for helpful discussions. That the residuals may be a by-product of the regenerative cascade was pointed out by Jon Hauenstein. Thanks to Greg Smith for pointing out that the equation system in the main theorem may be inverted explicitely. Finally thanks to Per Alexandersson, Jörgen Backelin and Jens Forsgård for help with the combinatorics.

\end{document}